\numberwithin{equation}{section}
\theoremstyle{plain}
\newtheorem{theorem}[equation]{Theorem}
\newtheorem{lemma}[equation]{Lemma}
\newtheorem{corollary}[equation]{Corollary}
\theoremstyle{definition}
\newtheorem{example}[equation]{Example}
\newtheorem{remark}[equation]{Remark}
\newtheorem{subsec}[equation]{}
\newtheorem{notation*}{Notation}
\DeclareMathOperator{\ch}{char}
\def\C{{\mathbb C}}
\newcommand{\R}{{\mathbb R}}
\newcommand{\A}{{\mathbb A}}
\newcommand{\ov}{\overline}
\newcommand{\Lie}{{\rm Lie}}
\newcommand{\isoto}{\overset{\sim}{\to}}
\newcommand{\labelto}[1]{\xrightarrow{\makebox[1.5em]{\scriptsize ${#1}$}}}
\newcommand{\hs}{\kern 0.8pt}
\newcommand{\hssh}{\kern 1.2pt}
\newcommand{\hshs}{\kern 1.6pt}
\newcommand{\hssss}{\kern 2.0pt}
\newcommand{\hm}{\kern -0.8pt}
\newcommand{\hmm}{\kern -1.2pt}
\newcommand{\kbar}{{\bar k}}
\newcommand{\vphi}{\varphi}
\newcommand{\vk}{\varkappa}
\newcommand{\Hbar}{{\ov H}}
\newcommand{\cc}{\raise 1.7pt \hbox{\Tiny{$\bullet$}}}
\begin{document}

\title[Quotient by a unipotent group]%
{Taking quotient by a unipotent group\\ induces a homotopy equivalence}

\author{Mikhail Borovoi}
\address{%
Borovoi: Raymond and Beverly Sackler School of Mathematical Sciences\\
Tel Aviv University\\
6997801 Tel Aviv\\
Israel}
\email{borovoi@tauex.tau.ac.il}

\author{Andrei Gornitskii}
\address{%
Gornitskii: National Research Nuclear University MEPhI\\
115409 Moscow\\
Russian Federation\\
and
Raymond and Beverly Sackler School of Mathematical Sciences\\
Tel Aviv University\\
6997801 Tel Aviv\\
Israel}
\email{gnomage@mail.ru}

\thanks{This research was supported
by the Israel Science Foundation, grant 870/16.
The second-named author was also supported by RFBR grant No. 20-01-00515}

\keywords{Unipotent group, smooth algebraic variety, smooth morphism,
Galois cohomology, second nonabelian Galois cohomology}

\subjclass{Primary: %
  14F35
; Secondary:%
  11E72
, 14L30
, 14P25
}

\date{\today}

\begin{abstract}
Let $U$ be a unipotent group over $\C$ acting
on an irreducible complex algebraic variety $X$.
Assume that there exists a surjective morphism of complex algebraic varieties
$\vphi\colon X\to Y$ whose fibres are orbits of $U$.
We show that if $X$ and $Y$ are smooth
and all orbits of $U$ in $X$ have the same dimension,
then the induced map on $\C$-points $X(\C)\to Y(\C)$ is a homotopy equivalence.
Moreover, if $U$, $X$, $Y$, and $\vphi$ are defined over $\R$,
then the induced map on $\R$-points $X(\R)\to Y(\R)$ is surjective
and induces  homotopy equivalences on connected components.
\end{abstract}

\maketitle

\section{Introduction}
\label{s:Intro}

Let $U$ be a unipotent algebraic group over
the field of complex numbers $\C$,
acting (not necessarily freely) on an algebraic $\C$-variety $X$.
Assume that there exists a surjective morphism
of $\C$-varieties $\vphi\colon X\to Y$
whose fibres are orbits of $U$ in $X$.
In the recent preprint \cite{Maican} Maican proved
that if  $X$ and $Y$ are irreducible smooth $\C$-varieties
and the morphism $\vphi$ is smooth,
then the map on $\C$-points $\vphi_{(\C)}\colon X(\C)\to Y(\C)$
induces isomorphisms on cohomology
\[ H^m(X(\C),A)\labelto\sim H^m(Y(\C),A)\]
for any abelian group $A$ and for all integers $m\ge 0$,
where $X(\C)$ and $Y(\C)$ are regarded with the usual topology.
See \cite[Proposition 4]{Maican}.

In this note we prove a stronger assertion: under the above assumptions,
the map $\vphi_{(\C)}\colon X(\C)\to Y(\C)$ is a homotopy equivalence.
Moreover, assuming that $U,X,Y,$ and $\vphi$
are defined over the field of real numbers $\R$,
we prove that the map $\vphi_{(\R)}\colon X(\R)\to Y(\R)$
is surjective and induces homotopy equivalences on connected components.

\subsection*{\bf Notation}

\begin{itemize}
\item[\cc] $k$ is a field of characteristic 0,
    and $\kbar$ is a fixed algebraic closure of $k$.

\item[\cc] By an algebraic variety over $k$ (in short: a $k$-variety)
    we mean a reduced separated scheme of finite type over $k$.

\item[\cc] For a $k$-variety $X$ and for a field extension $L/k$,
    we denote $X_L=X\times_k L$, the base change of $X$ to $L$,
    and we denote by $X(L)$ the set of $L$-points of $X$.
    We may identify $X(L)=X_L(L)$.

\item[\cc] By  a linear algebraic group over $k$ (in short: a $k$-group)
   we mean  an  affine group scheme of finite type over $k$.
   By Cartier's theorem it is automatically reduced,
   because $\ch k=0$; see Milne \cite[Theorem 3.23]{Milne}.

\item[\cc] By $\Lie(G)$ we denote the Lie algebra of a $k$-group $G$.
\end{itemize}

\section{Main results}
\label{s:main-res}

\begin{theorem}\label{t:hom-space}
Let $Z$ be a right homogeneous space of a unipotent group $U$
over a field $k$ of characteristic 0.
Then $Z$ has a $k$-point. Moreover, $Z$ is isomorphic as a $k$-variety
to the affine space $\A_k^d$, where $d=\dim Z$.
\end{theorem}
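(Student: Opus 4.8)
The plan is to argue by induction on $\dim U$, reducing the general case to a tower of $\G_a$-torsors. The two inputs I would isolate at the outset are: (i) every nontrivial unipotent $k$-group in characteristic $0$ contains a central $k$-subgroup $N$ with $N\cong\G_a$ (indeed the center of $U$ is a nontrivial vector group, so any line in it works); and (ii) a torsor under $\G_a$ over an affine $k$-variety $S$ is trivial, since such torsors are classified by $H^1(S,\calO_S)$, which vanishes for $S$ affine. The base case $\dim U=0$ is immediate: then $U$ is trivial and $Z$, being the orbit of a point under a connected group, is geometrically connected, geometrically reduced and $0$-dimensional, hence $Z=\Spec k=\A_k^0$.

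For the inductive step I would choose a central $k$-subgroup $N\subseteq U$ with $N\cong\G_a$ and set $\bar U=U/N$, a unipotent $k$-group with $\dim\bar U<\dim U$. Since $N$ is central, the stabilizer in $N$ of a point of $Z(\kbar)$ is the same for all points, hence is a $k$-subgroup of $N\cong\G_a$, and so is either trivial or all of $N$. If it is all of $N$, then $N$ acts trivially and $Z$ is a homogeneous space under $\bar U$; the induction hypothesis gives $Z(k)\neq\varnothing$ and $Z\cong\A_k^d$. If it is trivial, then $N$ acts freely, the quotient $Z':=Z/N$ is a homogeneous space under $\bar U$ of dimension $d-1$, and $Z\to Z'$ is an $N$-torsor. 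By induction $Z'\cong\A_k^{d-1}$; in particular $Z'$ is affine, so by input (ii) the $\G_a$-torsor $Z\to Z'$ is trivial, whence $Z\cong Z'\times_k\G_a\cong\A_k^d$. A $k$-point of $Z'$ together with the trivialization produces a $k$-point of $Z$, so both assertions are carried through the induction simultaneously.

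The main obstacle I expect is justifying, over $k$ and not merely over $\kbar$, that the quotient $Z':=Z/N$ exists as a $k$-variety, that it is a homogeneous space of $\bar U$, and that $Z\to Z'$ is a $\G_a$-torsor. Geometrically this is transparent from the description $Z_{\kbar}\cong \bar H\backslash U_{\kbar}$, since centrality of $N$ gives $Z_{\kbar}/N\cong(\bar H N)\backslash U_{\kbar}$, a homogeneous space of $\bar U_{\kbar}$; the real content is to descend this structure to $k$, either by invoking the existence of geometric quotients for free actions of smooth affine $k$-groups in characteristic $0$, or by Galois descent of the $\kbar$-construction. Everything else — the structure of the center, the classification of $\G_a$-torsors, and the vanishing $H^1(S,\calO_S)=0$ for affine $S$ — is standard, so it is the descent of the fibration $Z\to Z/N$ where the work concentrates.
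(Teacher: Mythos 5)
Your argument is correct in outline but takes a genuinely different route from the paper. The paper splits the statement into two cited black boxes: the existence of a $k$-point comes from nonabelian Galois cohomology (Douai's theorem that every class in $H^2(k,\Hbar,\vk)$ with unipotent band is neutral, so $Z$ is dominated by a torsor, combined with Sansuc's vanishing $H^1(k,U)=\{1\}$ to trivialize that torsor), and the isomorphism $Z\simeq\A_k^d$ is then the Bruce--du Plessis--Wall theorem that $H\backslash U\simeq\Lie(U)/\Lie(H)$ for the stabilizer $H$ of the $k$-point. Your d\'evissage through a central $\G_a$ replaces all of this by Serre's vanishing $H^1(S,\calO_S)=0$ for $S$ affine and proves both assertions in a single induction; it is more self-contained, and in effect reproves exactly the special cases of Douai's and Sansuc's results that are needed here. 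The one step you flag --- the existence of $Z/N$ over $k$ as a homogeneous space of $U/N$ with $Z\to Z/N$ an $N$-torsor --- is indeed where the care is required, and of your two suggested routes only the second is safe: a free action of $\G_a$ on an affine variety need \emph{not} admit a geometric quotient in the category of varieties in general (there are pathological free $\G_a$-actions on affine space), so one should not invoke a general quotient theorem. Instead use the group-theoretic description over $\kbar$: writing $Z_\kbar\simeq\Hbar\backslash U_\kbar$, the quotient $(\Hbar N_\kbar)\backslash U_\kbar$ exists and is affine; the canonical descent datum on $Z_\kbar$ induces one on this quotient by functoriality of the categorical quotient, and effectivity of Galois descent for quasi-projective (here affine) varieties produces $Z/N$ over $k$ with all the required properties. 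With that step made precise, your proof is complete.
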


We shall discuss the notion of a homogeneous space
and prove Theorem \ref{t:hom-space} in Section \ref{s:homogeneous}.

\begin{subsec}\label{ss:U-X-Y}
Let $U$ be a unipotent $k$-group.
Let $\alpha\colon X\times_k U\to X$ be a right action
of $U$ on a {\em smooth geometrically irreducible} $k$-variety $X$.
Assume that there exists a surjective morphism
onto a {\em smooth geometrically irreducible} $k$-variety $Y$
\[\vphi\colon X\to Y\]
whose fibres over the $\kbar$-points of $Y$
are the orbits of $U(\kbar)$ in $X(\kbar)$.
\end{subsec}

\begin{theorem}\label{t:main}
Let $k$ be either $\C$ or $\R$ (then $\kbar\simeq\C$).
For $U$, $X$, $\alpha$, $Y$, and $\vphi$ as in Subsection \ref{ss:U-X-Y},
assume that the morphism $\vphi_\kbar\colon X_\kbar\to Y_\kbar$ is smooth.
Then:
\begin{enumerate}
\renewcommand{\theenumi}{\roman{enumi}}
\item Each fibre $\vphi^{-1}(y)$ for $y\in Y(k)$ has a $k$-point, that is,
      the map $\vphi_{(k)}\colon X(k)\to Y(k)$ is surjective.
      Moreover, each  fibre $\vphi^{-1}(y)$ is isomorphic as a $k$-variety
      to the affine space $\A^d_k$, where $d=\dim X -\dim Y$.
\item The $C^\infty$-map of $C^\infty$-manifolds $\vphi_{(k)}\colon X(k)\to Y(k)$
      is a locally trivial fibre bundle;
      moreover, it induces  homotopy equivalences on connected components.
\end{enumerate}
\end{theorem}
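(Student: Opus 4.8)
I would treat the two parts separately, using Theorem \ref{t:hom-space} for (i) and a fibre-bundle argument for (ii), with the local triviality of $\vphi_{(k)}$ being the crux.

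For (i): since the fibres of $\vphi$ are the $U$-orbits, $\vphi$ is constant on orbits and the action restricts to each fibre. Fix $y\in Y(k)$ and put $Z=\vphi^{-1}(y)$. As $\vphi_\kbar$ is smooth, $Z_\kbar$ is smooth of dimension $d=\dim X-\dim Y$, and by hypothesis it is a single $U(\kbar)$-orbit; thus $Z$ is a right homogeneous space of $U$ over $k$. Theorem \ref{t:hom-space} then yields a $k$-point of $Z$ together with an isomorphism $Z\cong\A^d_k$. A $k$-point of $Z$ lies over $y$, so $\vphi_{(k)}$ is surjective, proving (i).

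For (ii): first note that $\vphi$ is smooth over $k$ (smoothness descends along the faithfully flat $k\into\kbar$), so $\vphi_{(k)}\colon X(k)\to Y(k)$ is a surjective submersion of $C^\infty$-manifolds; in particular it admits local $C^\infty$-sections. Fix $y_0\in Y(k)$, choose $x_0\in\vphi^{-1}(y_0)(k)$ by (i), and a section $s\colon V\to X(k)$ over a neighbourhood $V\ni y_0$ with $s(y_0)=x_0$. The plan is to trivialize $\vphi_{(k)}$ over $V$ using the action, via
\[\Phi\colon V\times U(k)\to\vphi^{-1}(V)(k),\qquad (v,u)\mapsto s(v)\cdot u.\]
Because each fibre is a homogeneous space $U/\mathrm{Stab}$ with $\mathrm{Stab}$ unipotent, Theorem \ref{t:hom-space} applied to $\mathrm{Stab}$-torsors shows that $U(k)$ acts transitively on the $k$-points of every fibre, so $\Phi$ is a surjective submersion whose point-fibres are cosets of the stabilizers $\mathrm{Stab}_U(s(v))$. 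If the action is free these cosets are points and $\Phi$ is the desired diffeomorphism $V\times U(k)\isom\vphi^{-1}(V)(k)$ with fibre $U(k)\cong k^{\dim U}=k^d$. In general I would reduce to this free case by induction on $\dim U$: peel off a central $\G_{\mathrm a}\cong C\subseteq U$, form a smooth intermediate quotient $X\to X/C\to Y$ of the same type, and apply the inductive hypothesis to both factors, exhibiting $\vphi_{(k)}$ as an iterated $\A^1(k)$-bundle, hence a locally trivial bundle with fibre $\A^d(k)\cong k^d$. The hard part is exactly this reduction: controlling the a priori varying stabilizer subgroups $\mathrm{Stab}_U(s(v))$ (equivalently, making the residual action free) and checking that the intermediate quotients remain smooth, geometrically irreducible, and have the $U$-orbits as fibres of constant dimension.

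Finally, granting local triviality with contractible connected fibre $F\cong k^d$, the homotopy statement follows formally. The long exact homotopy sequence of the fibration together with $\pi_n(F)=0$ gives isomorphisms $\pi_n\big(X(k)\big)\isom\pi_n\big(Y(k)\big)$ over each connected component; since $X(k)$ and $Y(k)$ have the homotopy type of CW-complexes, Whitehead's theorem promotes these into homotopy equivalences on connected components. As $F$ is connected, $\vphi_{(k)}$ carries each component of $X(k)$ onto a component of $Y(k)$ and is bijective on $\pi_0$, which together with the surjectivity from (i) settles the case $k=\R$. For $k=\C$ the manifold $Y(\C)$ is connected, so $\vphi_{(\C)}$ is a genuine homotopy equivalence.
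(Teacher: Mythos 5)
Your part (i) and your concluding homotopy-theoretic argument (the long exact sequence of the fibration, the vanishing of $\pi_n(k^d)$, the CW homotopy type of the manifolds, and Whitehead's theorem) coincide with the paper's proof. The genuine gap is in the crucial middle step: proving that the surjective submersion $\vphi_{(k)}$ is a locally trivial bundle. The paper does not construct a trivialization from the group action at all; it invokes a theorem of Meigniez \cite[Corollary 31]{Meigniez}, which says that a $C^\infty$ submersion all of whose fibres are diffeomorphic to a Euclidean space is a locally trivial fibre bundle. Since part (i) already identifies every fibre of $\vphi_{(k)}$ with $\A^d_k(k)\simeq k^d$, local triviality follows with no further work.

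Your substitute for this step --- the map $\Phi(v,u)=s(v)\cdot u$ together with an induction on $\dim U$ that peels off a central $\G_{\rm a}\simeq C\subset U$ --- is exactly where the argument breaks down, as you yourself flag. The hypotheses control the dimension of the $U$-orbits, not of the $C$-orbits: the stabilizers $\mathrm{Stab}_U(x)$ all have dimension $\dim U-d$ but vary as subgroups, so $\dim\bigl(\mathrm{Stab}_U(x)\cap C\bigr)$ can jump, the $C$-orbits need not be equidimensional, and the intermediate map $X\to X/C$ need not be smooth. Moreover, a geometric quotient $X/C$ by a $\G_{\rm a}$-action need not even exist as a variety (invariant rings of unipotent actions can fail to be finitely generated, and orbit spaces can be non-separated), and nothing in the hypotheses of \ref{ss:U-X-Y} guarantees that $X\to X/C$ and $X/C\to Y$ land back in that setting. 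So the reduction to the free case is not a routine verification but an unproved claim facing real obstructions, and the proof is incomplete without it. The repair is to abandon the group-theoretic trivialization entirely: once you know $\vphi_{(k)}$ is a surjective submersion whose fibres are all diffeomorphic to $k^d$, Meigniez's theorem gives local triviality directly, after which your homotopy argument goes through verbatim.
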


We shall prove Theorem \ref{t:main} in Section \ref{s:homotopy-eq}.

\begin{corollary}[{Maican \cite[Proposition 4]{Maican}}]
Let $k=\C$ and let $U,\ X,\ \alpha,\  Y,$ and $\vphi$
be as in Theorem \ref{t:main}.
Then for any integer $m\ge 0$ and for any abelian group $A$,
the map  $\vphi_{(\C)}\colon X(\C)\to Y(\C)$ induces an isomorphism
\[ H^m(X(\C), A)\labelto{\sim} H^m(Y(\C), A).\]
\end{corollary}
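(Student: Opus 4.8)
The plan is to deduce the corollary purely formally from Theorem \ref{t:main}(ii), together with the homotopy-invariance of singular cohomology. First I would unwind what the hypotheses say in the case $k=\C$. Since $\kbar\simeq\C$, the assumption in Subsection \ref{ss:U-X-Y} that $X$ and $Y$ are geometrically irreducible simply means that the complex varieties $X$ and $Y$ are irreducible. As $X$ and $Y$ are also smooth, their sets of $\C$-points $X(\C)$ and $Y(\C)$, taken with the usual (analytic) topology, are connected complex manifolds: here I would invoke the classical fact that the analytic space attached to a smooth irreducible complex variety is connected (a standard result in complex algebraic geometry, following from the agreement of the Zariski-connectedness and analytic-connectedness for smooth varieties).

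Granting this connectedness, each of $X(\C)$ and $Y(\C)$ consists of a single connected component. Hence the conclusion of Theorem \ref{t:main}(ii)---that $\vphi_{(\C)}$ induces homotopy equivalences on connected components---says precisely that the map $\vphi_{(\C)}\colon X(\C)\to Y(\C)$ is a homotopy equivalence of topological spaces. The next step is to recall the basic property that a homotopy equivalence $f\colon S\to T$ induces, for every abelian group $A$ and every integer $m\ge 0$, an isomorphism on singular cohomology $f^*\colon H^m(T,A)\labelto{\sim} H^m(S,A)$. Applying this with $f=\vphi_{(\C)}$ yields the desired isomorphisms $H^m(X(\C),A)\labelto{\sim} H^m(Y(\C),A)$, induced by $\vphi_{(\C)}$, and completes the argument.

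I do not anticipate any genuine obstacle, since the corollary is a formal consequence of the substantially harder Theorem \ref{t:main}, whose proof is deferred to Section \ref{s:homotopy-eq}. The only point meriting a line of care is the connectedness of $X(\C)$ and $Y(\C)$, which is exactly what lets one upgrade ``homotopy equivalence on each connected component'' to ``global homotopy equivalence''; this is supplied by the geometric irreducibility (hence irreducibility over $\C$) together with the smoothness assumptions already in force.
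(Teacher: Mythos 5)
Your proof is correct and takes essentially the same route as the paper, which simply states that the corollary follows immediately from Theorem \ref{t:main}(ii); you merely make explicit the (correct) observations that smooth irreducible complex varieties have connected sets of $\C$-points, so the componentwise homotopy equivalence is a global one, and that singular cohomology is a homotopy invariant.
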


\begin{proof}
The corollary follows immediately from Theorem \ref{t:main}(ii).
\end{proof}

\begin{remark}
Since $X_\kbar$ and $Y_\kbar$ are smooth,
and the  fibres  $\vphi^{-1}(\vphi(x))=x\cdot U_\kbar$
for $x\in X(\kbar)$ are smooth as well,
the morphism $\vphi_\kbar$ in Theorem \ref{t:main} is smooth
if and only if all fibres have the same dimension $\dim X -\dim Y$.
See Vakil \cite[Theorem 25.2.2 and Exercise 25.2.F (a)]{Vakil}.
\end{remark}

\begin{example}
Let $G$ be a connected linear algebraic $k$-group, 
where $k$ is either $\C$ or $\R$. 
Let $V\subset G$ be a unipotent $k$-subgroup of $G$, 
and let $N\subset V$ be a normal $k$-subgroup of $V$ 
(which is also unipotent).
We construct smooth $k$-varieties  $X=G/N$ and $Y=G/V$.
We have a natural morphism of $k$-varieties
\[\vphi\colon X\to Y,\quad gN\mapsto gV\quad\text{for }g\in G(\kbar).\]
The unipotent $k$-group $U:=V/N$ naturally acts on $X$ on the right by
\[(gN,vN)\mapsto gv N\quad  \text{for }g\in G(\kbar),\ v\in V(\kbar),\] 
and the fibres of $\vphi_\kbar$ are the orbits of $U_\kbar$. 
Moreover, each orbit is a principal homogeneous space of $U_\kbar$.
Since all orbits are of the same dimension  $\dim U$,
the morphism $\vphi_\kbar$ is smooth.
Thus the morphism $\vphi\colon X\to Y$ 
satisfies the hypotheses of Theorem \ref{t:main}.
By this theorem, the $C^\infty$-map $\vphi_{(k)}\colon X(k)\to Y(k)$ 
induces homotopy equivalences on connected components.
\end{example}

\section{Homogeneous space of a unipotent group}
\label{s:homogeneous}

In this section we prove Theorem \ref{t:hom-space}.

\begin{subsec}
\label{ss:hom-space-def}
Let $G$ be a linear algebraic group over $k$.
A {\em right homogeneous space of} $G$
is a $k$-variety $Z$ together with a right action
$\alpha\colon Z\times_k G\to Z$
such that $G(\kbar)$ acts on $Z(\kbar)$ transitively.
We do not assume that $Z$ has a $k$-point.
We shall write just ``a homogeneous space'' for a right homogeneous space.

A  {\em principal homogeneous space (torsor)} of $G$
is a homogeneous space $P$ of $G$ such that
$G(\kbar)$ acts on $P(\kbar)$ {\em simply} transitively, that is,
the stabilizer in $G(\kbar)$ of a $\kbar$-point $ p\in P(\kbar)$ is trivial.

By a {\em morphism} $Z\to Z'$ of homogeneous spaces of $G$
we mean a $G$-equivariant morphism of $k$-varieties.
We say that a homogeneous space $Z$ of $G$ is {\em dominated by a torsor}
if there exists a torsor $P$ of $G$
and a morphism of homogeneous spaces $\psi\colon P\to Z$
(which is surjective on $\kbar$-points
because $P$ and $Z$ are  homogeneous spaces).

Let $z\in Z(\kbar)$ be a $\kbar$-point,
and let $\Hbar\subset G_\kbar$ denote its stabilizer.
The homogeneous space $Z$ defines a {\em $k$-kernel}
($k$-band, $k$-lien) $\vk$ for $\Hbar$
and a cohomology class $\eta(Z)\in H^2(k, \Hbar,\vk)$.
The set $H^2(k, \Hbar,\vk)$ is called the
{\em second nonabelian cohomology set} for $\Hbar$ and $\vk$.
It has a distinguished subset $N^2(k, \Hbar,\vk)$
called the subset of {\em neutral elements}.
The homogeneous space $Z$ is dominated by a torsor if and only if
$\eta(Z)\in N^2(k, \Hbar,\vk)$.
See Springer \cite[Section 1.20]{Springer} for details.
See also \cite[Section 7.7]{Borovoi-Duke} and \cite[Section 1]{FSS}.
\end{subsec}

\begin{lemma}\label{l:unip-dominating}
Let $Z$ be a homogeneous space of a unipotent algebraic group $U$
over a field $k$ of characteristic 0.
Then $Z$ is dominated by a torsor.
\end{lemma}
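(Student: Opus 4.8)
The plan is to prove the lemma by induction on $\dim U$, constructing the dominating torsor $P$ explicitly rather than verifying by hand that $\eta(Z)$ is neutral. Fix a $\kbar$-point $z\in Z(\kbar)$ with stabilizer $\Hbar\subseteq U_\kbar$; since $\ch k=0$, every closed subgroup of a unipotent group is again connected and unipotent, so $\Hbar$ is a connected unipotent $\kbar$-group, and in particular $U$ is torsion-free. When $U$ is trivial, $Z(\kbar)$ is a single point, so $Z\cong\Spec k$ is a torsor of the trivial group, which settles the base case. For the inductive step I would peel off a one-dimensional central subgroup and reduce $\dim U$ by one.

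Since $U$ is a nontrivial unipotent, hence nilpotent, $k$-group, its centre is a nontrivial vector $k$-group, and I can choose a central $k$-subgroup $W\cong\G_a$. Because $W$ is central and $Z$ is homogeneous, an element $w\in W$ fixes $z$ if and only if it acts trivially on all of $Z$; thus the kernel of the action of $W$ on $Z$, a $k$-subgroup of $W$, coincides with $W\cap\Hbar$. As $\ch k=0$, the group $\G_a$ has no proper nontrivial $k$-subgroups, so there are exactly two cases: either $W\subseteq\Hbar$, in which case $W$ acts trivially on $Z$, or $W\cap\Hbar=1$, in which case $W$ acts freely.

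In the free case, $Z':=Z/W$ is a homogeneous space of $\bar U:=U/W$ with $\dim\bar U<\dim U$, so by induction there is a dominating $\bar U$-torsor $\psi'\colon P'\to Z'$. I would then set $P:=P'\times_{Z'}Z$ with the diagonal $U$-action and check that the projection $P\to Z$ exhibits $Z$ as dominated by $P$: a direct computation on $\kbar$-points shows that the $U$-action on $P$ is transitive and that the stabilizer of a point is $\Hbar\cap W=1$, so $P$ is a $U$-torsor. In the trivial case, $Z$ is already a homogeneous space of $\bar U$, and induction gives a dominating $\bar U$-torsor $\bar P\to Z$; here I must lift $\bar P$ along the central extension $1\to W\to U\to\bar U\to 1$ to a genuine $U$-torsor $P$, and then compose $P\to\bar P\to Z$.

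The hard part is this lifting step. It is governed by the connecting map $H^1(k,\bar U)\to H^2(k,W)$ attached to the central extension, and the lift exists precisely because the obstruction vanishes: for a vector $k$-group one has $H^i(k,\G_a)=0$ for all $i\ge 1$ in characteristic $0$ (additive Hilbert 90 / the normal basis theorem), so $H^2(k,W)=0$ kills the obstruction and $H^1(k,W)=0$ ensures the map $P\to\bar P$ is a $W$-torsor with trivial stabilizers, making $P$ a $U$-torsor. The remaining points — that the free quotient $Z/W$ exists as a smooth $k$-homogeneous space of $\bar U$ and that the fibre product is reduced of the expected dimension — are routine in characteristic $0$, and I would dispatch them using the standard theory of quotients by unipotent group actions.
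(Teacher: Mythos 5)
Your argument is correct in outline, but it takes a genuinely different route from the paper. The paper's proof is short and cohomological: it considers the class $\eta(Z)\in H^2(k,\Hbar,\vk)$ and invokes Douai's theorem that for a unipotent band in characteristic $0$ every class is neutral, $N^2(k,\Hbar,\vk)=H^2(k,\Hbar,\vk)$, so $\eta(Z)$ is neutral and $Z$ is dominated by a torsor. You instead carry out the d\'evissage that underlies that theorem: induction on $\dim U$, peeling off a central copy of $\G_a$, splitting into the case where $W$ acts freely (handled by a fibre product with a dominating torsor of $Z/W$) and the case where $W$ acts trivially (handled by lifting a $\bar U$-torsor along the central extension, the obstruction dying in $H^2(k,\G_a)=0$). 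What your approach buys is self-containedness: the only cohomological input is the vanishing of $H^i(k,\G_a)$ for $i\ge 1$, and you never need the formalism of $k$-kernels and nonabelian $H^2$ at all. What it costs is length and several foundational verifications that the paper's citation absorbs for free.

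Two of those verifications deserve more than the word ``routine.'' First, the existence of $Z/W$ as a $k$-variety: quotients by unipotent group actions are \emph{not} well behaved in general, and what saves you here is specifically that $Z$ is a homogeneous space, so that $(Z/W)_\kbar\simeq (\Hbar W)\backslash U_\kbar$ is affine and the quotient descends to $k$ by Galois descent of quasi-projective varieties (or, equivalently, one takes $\Spec$ of $W$-invariants of the affine coordinate ring of $Z$). You should say this rather than gesture at ``the standard theory.'' Second, in the trivial-action case the relevant fact is the exactness of $H^1(k,U)\to H^1(k,\bar U)\to H^2(k,W)$ for the central extension $1\to W\to U\to\bar U\to 1$ (Serre, \emph{Galois cohomology}, I.5.7); your remark that $H^1(k,W)=0$ makes $P\to\bar P$ a torsor with trivial stabilizers is beside the point --- what you need is that the vanishing of the image of $[\bar P]$ in $H^2(k,W)$ produces a class in $H^1(k,U)$ whose torsor $P$ satisfies $P/W\simeq\bar P$, after which $P\to\bar P\to Z$ is the required domination. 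With these two points made precise, your proof is complete and is essentially a proof of the special case of Douai's theorem that the paper quotes.
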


\begin{proof}
Let $\Hbar\subset U_\kbar$ be the stabilizer of a $\kbar$-point $z\in Z(\kbar)$,
and consider $\eta(Z)\in H^2(k, \Hbar,\vk)$.
Since $\Hbar$ is unipotent and $\ch k=0$,
by Douai's theorem \cite[Theorem IV.1.3]{Douai},
see also \cite[Corollary 4.2]{Borovoi-Duke},
we have $N^2(k, \Hbar,\vk)=H^2(k, \Hbar,\vk)$.
Thus  $\eta(Z)\in N^2(k, \Hbar,\vk)$,
and therefore $Z$ is dominated by a torsor $P$, as required.
\end{proof}

\begin{lemma}\label{l:unip-k-point}
Let $Z$ be as in Lemma \ref{l:unip-dominating}. Then $Z$ has a $k$-point.
\end{lemma}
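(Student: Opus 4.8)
The plan is to combine Lemma \ref{l:unip-dominating} with a known fact about torsors of unipotent groups: that every torsor under a unipotent $k$-group in characteristic $0$ is trivial, hence has a $k$-point. By Lemma \ref{l:unip-dominating} there exists a torsor $P$ of $U$ and a $U$-equivariant morphism $\psi\colon P\to Z$ defined over $k$. If $P$ has a $k$-point $p\in P(k)$, then $\psi(p)\in Z(k)$ gives the desired $k$-point of $Z$, since $\psi$ is a morphism of $k$-varieties.

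So the crux reduces to showing that the torsor $P$ has a $k$-point, equivalently that its class in the pointed set $H^1(k,U)$ is trivial. The key input I would invoke is the classical vanishing $H^1(k,U)=1$ for a unipotent $k$-group $U$ over a field $k$ of characteristic $0$. First I would recall the reduction principle: a unipotent group over a perfect field (in particular over any field of characteristic $0$) is \emph{split}, meaning it admits a filtration by normal $k$-subgroups whose successive quotients are isomorphic to the additive group $\G_a$. This is standard (for instance via the theory of unipotent groups in characteristic $0$, where the exponential map furnishes an isomorphism with the associated Lie algebra as a variety). Next I would run the long exact cohomology sequence associated to each short exact sequence $1\to \G_a\to U'\to U''\to 1$ in the filtration, using $H^1(k,\G_a)=0$ (additive Hilbert~90, i.e. the vanishing of the Galois cohomology of a vector group) together with dévissage on the length of the filtration to conclude $H^1(k,U)=1$.

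An alternative, more self-contained route that avoids explicitly quoting the filtration theory would be to prove directly by induction on $\dim U$ that every torsor of a unipotent group has a $k$-point, pushing the torsor down along the quotient by the center (or by a central $\G_a$) and lifting the resulting $k$-point using the vanishing of $H^1$ for the central vector subgroup. Either way, the structural fact that $\G_a$-torsors over a field are trivial is the engine.

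The main obstacle is really only a matter of citing the right result cleanly rather than a genuine difficulty: one must be careful that the torsor $P$ produced by Lemma \ref{l:unip-dominating} is a torsor under $U$ itself (and not merely under the stabilizer $\Hbar$), so that the vanishing $H^1(k,U)=1$ applies directly to $P$. I expect this to be immediate from the definition of ``dominated by a torsor'' in Subsection \ref{ss:hom-space-def}, but it is the one point where I would check the equivariance and the base group carefully before concluding that $P(k)\neq\emptyset$ and hence $Z(k)\neq\emptyset$.
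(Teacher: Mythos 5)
Your proposal is correct and follows essentially the same route as the paper: Lemma \ref{l:unip-dominating} provides a $U$-torsor $P$ dominating $Z$, the vanishing $H^1(k,U)=\{1\}$ for unipotent $U$ in characteristic $0$ (which the paper cites as Sansuc's lemma rather than reproving by d\'evissage) shows $P$ is trivial, and the $k$-point of $P$ is pushed forward to $Z$ via the equivariant morphism $\psi$.
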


\begin{proof}
By Lemma \ref{l:unip-dominating} there exists a torsor $P$ of $U$
and a $U$-equivariant $k$-morphism $\psi\colon P\to Z$.
Let $\xi(P)\in H^1(k,U)$ denote the {\em cohomology class of} $P$
in the first Galois cohomology set;
see Serre \cite[Section I.5.2, Proposition 33]{Serre}.
Since $U$ is unipotent group and $\ch k=0$,
by Sansuc's lemma \cite[Lemma 1.13]{Sansuc},
see also \cite[Proposition 3.1]{BDR}, we have $H^1(k,U)=\{1\}$.
It follows that $\xi(P)=1$, and therefore $P$ has a $k$-point $p_0$.
Then $z_0:=\psi(p_0)$ is a $k$-point of $Z$, as required.
\end{proof}

\begin{subsec}{\em Proof of Theorem \ref{t:hom-space}.}
Let $Z$ and $U$ be as in the theorem.
By Lemma \ref{l:unip-k-point}, $Z$ has a $k$-point $z_0$.
Let $H$ denote the stabilizer of $z_0$ in $U$.
Then the map
\[U\to F,\quad u\mapsto z_0\cdot u\]
induces an isomorphism of $k$-varieties $H\backslash U\isoto Z$.
Since $U$ is a unipotent group and $\ch k=0$,
by Proposition (3.2) of Bruce, du Plessis, and Wall \cite{BdPW},
the $k$-variety $H\backslash U$
is isomorphic to the affine space $\Lie(U)/\Lie(H)$,
which, of course, is isomorphic as a $k$-variety to $\A_k^d$,
where $d=\dim\big( \Lie(U) /\Lie(H)\big)=\dim Z$.
This completes the proof of Theorem  \ref{t:hom-space}.\qed
\end{subsec}

\section{Homotopy equivalence}
\label{s:homotopy-eq}

In this section we prove  Theorem \ref{t:main}.

By hypothesis, the fibre $Z_y:=\vphi^{-1}(y)$ for $y\in Y(k)$
is a homogeneous space of $U$.
Therefore, assertion (i) of Theorem  \ref{t:main}
follows immediately from Theorem \ref{t:hom-space}.

We prove assertion (ii) of Theorem \ref{t:main}.
Since $X$ and $Y$ are smooth, the sets of $k$-points $X(k)$ and $Y(k)$
(where $k$ is either $\C$ or $\R$)
are naturally  $C^\infty$-manifolds.
Consider the $C^\infty$-map  $\vphi_{(k)}\colon X(k)\to Y(k)$.
By Theorem  \ref{t:main}(i), the map $\vphi_{(k)}$ is surjective.
Since the morphism $\vphi_\kbar$ is smooth, the map
$\vphi_{(\kbar)}\colon X(\kbar)\to Y(\kbar)$ is a submersion,
that is, for any $x\in X(\kbar)=X_\kbar(\kbar)$, the differential
\[ d_x\hs\vphi\colon T_x(X_\kbar)\to T_{\vphi(x)}(Y_\kbar)\]
is surjective, where $T_x(X_\kbar)$ and $T_{\vphi(x)}(Y_\kbar)$
denote the corresponding tangent spaces.
See Hartshorne \cite[Proposition 10.4]{Hartshorne}.
It follows that the map $\vphi_{(k)}\colon X(k)\to Y(k)$ is a submersion as well.
By Theorem \ref{t:main}(i), each fibre $\vphi_{(k)}^{-1}(y)$ for $y\in Y(k)$
is diffeomorphic to $\A_k^d(k)\simeq k^d$.
Since the $C^\infty$-map $\vphi_{(k)}\colon X(k)\to Y(k)$ is a submersion
with fibres diffeomorphic to $k^d$,
by  Meigniez \cite[Corollary 31]{Meigniez}
it is a locally trivial fibre bundle of $C^\infty$-manifolds.

Let $x_0\in X(k)$, $y_0=\vphi(x_0)\in Y(k)$, $Z=\vphi^{-1}(y_0)\subset X$.
We may identify $Z(k)$ with $k^d$ such that $x_0$ corresponds to $0\in k^d$.
Since $\vphi_{(k)}\colon X(k)\to Y(k)$ is a locally trivial bundle,
we have  a homotopy exact sequence
\[\dots \to\pi_{i+1}(Z(k),x_0)\to \pi_i(X(k),x_0)\labelto{(\vphi_{(k)})_i}
            \pi_i(Y(k),y_0)\to\pi_i(Z(k),x_0)\dots;\]
see, for instance, Arkowitz \cite[Corollary 4.2.19(2)]{Arkowitz}.
Since $\pi_i(Z(k),x_0)=\pi_i(k^d,0)=0$ for all $i$, we conclude
that the  maps $(\vphi_{(k)})_i$ are bijective for all $i\ge 0$.
The topological manifolds $X(k)$ and $Y(k)$
have homotopy types of  CW complexes;
see, for instance, Lundell and Weingram  \cite[Corollary IV.5.7]{LW}.
Hence by Whitehead's theorem
(see, for instance, Hatcher \cite[Theorem 4.5]{Hatcher})
the map $\vphi_{(k)}$ induces a homotopy equivalence
between the connected component of $x_0$ in $X(k)$
and the connected component of $y_0$ in $Y(k)$,
which completes the proof of Theorem \ref{t:main}. \qed

\subsection*{\sc Acknowledgements} 
The authors are grateful to Mario Maican for helpful email correspondence.


\begin{thebibliography}{99}

\bibitem%
{Arkowitz}
Arkowitz, M.
\emph{Introduction to homotopy theory.}
Universitext, Springer-Verlag, New York, 2011.

\bibitem
{Borovoi-Duke}
Borovoi, M. ``Abelianization of the second nonabelian {G}alois
  cohomology.'' \emph{Duke Math. J.} 72, no.~1 (1993), 217--239.

\bibitem
{BDR}
Borovoi, M.,  C. Daw, and J. Ren.
``Conjugation of semisimple subgroups
over real number fields of bounded degree.''
To appear in \emph{Proc. Amer. Math. Soc.,} \url{arXiv:1802.05894[math.GR]},
DOI: 10.1090/proc/14505.

\bibitem%
{BdPW}
 Bruce, J. W., A. A. du Plessis, and  C. T. C. Wall.
 ``Determinacy and unipotency.'' \emph{Invent. Math.} 88 (1987), no. 3, 521--554.

\bibitem
{Douai}
Douai, J.-C. \emph{2-cohomologie galoisienne des groupes semi-simples.}
  Th\`{e}se, Universit\'{e} de Lille I, 1976.

\bibitem
{FSS}
Flicker, Y. Z., C. Scheiderer, and R.~Sujatha. ``Grothendieck's
  theorem on non-abelian {$H^2$} and local-global principles.''
  \emph{J. Amer. Math. Soc.} 11, no.~3 (1998), 731--750.


\bibitem%
{Hartshorne}
Hartshorne, R.
\emph{Algebraic geometry.}
Graduate Texts in Mathematics, No. 52, Springer-Verlag, New York-Heidelberg, 1977.


\bibitem%
{Hatcher}
Hatcher, A.
\emph{Algebraic topology.}  Cambridge University Press, Cambridge, 2002.

\bibitem%
{LW}
Lundell, A. T. and S. Weingram.
\emph{The topology of CW complexes.}
The University Series in Higher Mathematics. Van Nostrand Reinhold Co., New York, 1969.

\bibitem%
{Maican}
Maican, M. ``The effect on topology of the action of a unipotent group.''
 \url{arXiv:2104.00171v1 [math.AG]}.

\bibitem%
{Meigniez}
Meigniez, G. ``Submersions, fibrations and bundles.''
\emph{Trans. Amer. Math. Soc.} 354 (2002), no. 9, 3771--3787.

\bibitem
{Milne}
Milne, J. S.
{\em Algebraic groups.
The theory of group schemes of finite type over a field.}
Cambridge Studies in Advanced Mathematics, 170,
Cambridge University Press, Cambridge, 2017.

\bibitem
{Sansuc}
Sansuc, J.-J. ``Groupe de {B}rauer et arithm\'etique des groupes
  alg\'ebriques lin\'eaires sur un corps de nombres.'' \emph{J. Reine Angew. Math.}
  327 (1981), 12--80.

\bibitem
{Serre}
Serre, J.-P. \emph{Galois cohomology.} Springer-Verlag, Berlin, 1997.

\bibitem
{Springer}
Springer, T. A. ``Nonabelian {$H^{2}$} in {G}alois cohomology.'' In:
  \emph{Algebraic {G}roups and {D}iscontinuous {S}ubgroups ({P}roc. {S}ympos. {P}ure
  {M}ath., {B}oulder, {C}olo., 1965), Proc. Sympos. Pure Math. IX,}  pp.~164--182.
  Amer. Math. Soc., Providence, R.I., 1966.

\bibitem%
{Vakil}
Vakil, R. \emph{The Rising Sea. Foundations of Algebraic Geometry.}
Book in progress, November 18, 2017 draft,
\url{http://math.stanford.edu/~vakil/216blog/FOAGnov1817public.pdf}.

\end{thebibliography}
\end{document}